\theoremstyle{plain}
\newtheorem{theorem}{Theorem}
\newtheorem{definition}[theorem]{Definition}
\newtheorem{proposition}[theorem]{Proposition}
\newtheorem{remark}[theorem]{Remark}
\newcommand\NN{\mathbb{N}}
\newcommand\RR{\mathbb{R}}
\newcommand\ex{\mathbb{E}}
\newcommand\eins{\mathrm{1}}
\numberwithin{equation}{section}
\begin{document}

\title[Extensions of the Hitsuda-Skorokhod integral]{Extensions of the Hitsuda-Skorokhod integral}

\author[P. Parczewski]{Peter Parczewski}


\address{University of Mannheim, Institute of Mathematics A5, 6, D-68131 Mannheim, Germany.}
\email{parczewski@math.uni-mannheim.de}

\date{\today}

\begin{abstract}
We present alternative definitions of the stochastic integral introduced by Ayew and Kuo and of the Hitsuda-Skorokhod integral extended to domains in $L^p$-spaces, $p \geq 1$. Our approach is motivated by the S-transform characterization of the Hitsuda-Skorokhod integral and based on simple processes of stochastic exponential type. We prove that the new stochastic integral extends the mentioned stochastic integrals above and we outline their connection.
\end{abstract}

\keywords{Hitsuda-Skorokhod integral, anticipating stochastic integral, S-transform}

\subjclass[2010]{60H05, 60H40}

\maketitle

\section{Introduction}

A series of articles starting from the work by Ayew and Kuo \cite{AyedKuo1, AyedKuo2} establishes a new stochastic integral with respect to a Brownian motion extending the It\^o integral to nonadapted processes. 
Let $B= (B_{t})_{t \geq 0}$ be a Brownian motion on  a probability space $(\Omega, \mathcal{F}, P)$,  where the $\sigma$-field $\mathcal{F}$ is generated by the Brownian motion and completed by null sets. The augmented Brownian filtration is denoted by $(\mathcal{F}_t)_{t\geq 0}$.

The construction of the Ayew-Kuo stochastic integral on a compact interval $[a,b] \subset [0,\infty)$ is based on the consideration of the adapted part $(f(t))_{t \in [a,b]}$, and the remaining part $(\varphi(t))_{t \in [a,b]}$, the so-called instantly independent part, of the integrand, which means that the element $\varphi(t)$ is independent of $\mathcal{F}_t$ for every $t \in [a,b]$. 
This decomposition of the integrand allows a simple construction of this Ayew-Kuo integral for continuous integrands $f \varphi$ via a Riemann-sum approach, see e.g. \cite{AyedKuo1,AyedKuo2,HKSZ}, similar to the Riemann-sum approach for the general It\^o integral (cf. \cite[Section 5.3]{KuoStochInt}). In particular, the Ayew-Kuo integral allows to integrate nonadapted processes $f\varphi \notin L^2(\Omega \times [a,b])$.

A classical extension of the It\^o integral to nonadapted integrands is the Hitsuda-Skorokhod integral, see e.g. \cite{Holden_Buch, Nualart}. A Riemann-sum approach to the Hitsuda-Skorokhod integral is given in \cite{NualartPardoux}. However, the Hitsuda-Skorokhod integral can be introduced by various simpler approaches, like Wiener chaos expansions \cite{Holden_Buch}, as the adjoint of the Malliavin derivative \cite{Nualart} or from the more general white noise integrals \cite{Kuo}. Whereas these approaches mostly depend on the Hilbert space structure of the space containing the integrands, there exist characterizations of the Hitsuda-Skorokhod integral via transforms, see e.g. \cite{Janson}.

One of these characterizations is as follows. We denote by $I(f)$ the Wiener integral of $f \in L^2([0,\infty))$  and by $\|\cdot\|$ the norm on this space. The process $u \in L^2(\Omega \times [0,\infty))$ is Skorokhod-integrable, if there exists an element $X \in L^2(\Omega)$ such that for all $f \in L^2([0,\infty))$,
\[
\ex[X e^{I(f) - \|f\|^2/2}] = \int_{0}^{\infty}  \ex[u_s e^{I(f) - \|f\|^2/2}] f(s) ds.
\]
Then $\delta(u) := X$ is the Hitsuda-Skorokhod integral. This characterization is mainly based on the fact that the linear span of the set $\{e^{I(f) - \|f\|^2/2}, f \in L^2([0,\infty))\}$ is dense in $L^2(\Omega)$.

Such transform characterizations of the Hitsuda-Skorokhod integral can be extended beyond integrands in $L^2(\Omega \times [0,\infty))$, see e.g. \cite[Chapter 16]{Janson}.

Due to the very different introductions, the relation between the Ayew-Kuo integral and the Hitsuda-Skorokhod integral is not clarified yet and stated as an open problem in the survey article on the Ayew-Kuo integral \cite{KuoNewInt}.

In this note we prove that the Ayew-Kuo integral equals the Hitsuda-Skorokhod integral on Skorokhod integrable processes in $L^2(\Omega \times [0,\infty))$. Moreover we propose alternative definitions of stochastic integrals which extend both, the Ayew-Kuo integral and the Hitsuda-Skorokhod integral to domains in $L^p(\Omega \times [0,\infty))$ for all $p \geq 1$. 

Our approach imitates the characterization of the Hitsuda-Skorokhod integral. Firstly, we define the stochastic integral on the set of simple processes
\[
u_s = e^{I(g) - \|g\|^2/2} h(s)
\]
for simple deterministic functions $g,h: [0,\infty) \rightarrow \RR$. Then, the stochastic integral is extended by linearity and density arguments to different domains. As the integrals of these nonadapted processes, defined via
\[
e^{I(g) - \|g\|^2/2}\left(I(h) - \int_{0}^{\infty} g(s)h(s) ds\right) ,
\]
are elements in $L^1(\Omega \times [0,\infty))$ and extend the It\^o integral elementary, the object of interest in the extension of the integral are the topologies involved. We observe that the Ayew-Kuo integral is extended via a version of an almost-sure-probability closedness which is more general than the $L^p(\Omega \times [0,\infty))$-$L^p(\Omega)$-closedness of the general Hitsuda-Skorokhod integral. 

The article is organized as follows. In Section \ref{section:StartingNewInt} we show that already a subclass of elementary integrands of the Ayew-Kuo integral is in the domain of the Hitsuda-Skorokhod integral and both stochastic integrals then coincide.

In Section \ref{section:Extensions} we propose alternative definitions of the extended Hitsuda-Skorokhod integral and of the Ayew-Kuo integral to domains in $L^p(\Omega \times [0,\infty))$ for $p \geq 1$. In our main result we prove that the new definition of the Ayew-Kuo integral extends the definition in \cite{KuoNewInt, HKSZ} as well as the extended Hitsuda-Skorokhod integral.

\section{Starting from the Ayew-Kuo stochastic integral}\label{section:StartingNewInt}

Firstly, we introduce the \emph{elementary Ayew-Kuo integral}. The following definition is taken from \cite{HKSZ}. 

\begin{definition}\label{def:ElemNewIntegral}
Let $[a,b] \subset [0,\infty)$. Suppose an adapted continuous stochastic process $\left(f(t)\right)_{t \in [a,b]}$ and an instantly independent continuous process $\left(\varphi(t)\right)_{t \in [a,b]}$, i.e. for all $t \in [a,b]$, $\varphi(t)$ is independent of $\mathcal{F}_t$. Then, provided the limit on the right hand side exists in probability, the Ayew-Kuo integral is given by
\begin{equation}\label{eq:NewIntegral1}
\bar{I}(f\varphi) := \int_{a}^{b} f(t) \varphi(t) dB_t = \lim_{\| \Delta_n\| \rightarrow 0} \sum_{i=1}^{n} f(t_{i-1}) \varphi(t_i) \Delta B_{i},
\end{equation}
where $\Delta_n = \{a = t_0 < t_1< \cdots < t_n = b\}$ with $\| \Delta_n\|= \max\limits_{i=1,\ldots, n} |t_i-t_{i-1}|$ and $\Delta B_i := B_{t_i} - B_{t_{i-1}}$.
\end{definition}

It is clear that the elementary Ayew-Kuo integral extends the It\^o integral to nonadapted integrands and exhibits the zero mean property. Moreover, the integrands are not necessary in $L^2(\Omega \times [a,b], \mathcal{F} \otimes \mathcal{B}([a,b]), P \otimes \lambda_{[a,b]})$.

A classical extension of the It\^o integral to nonadapted integrands with the additive noise structure is the Hitsuda-Skorokhod integral. There are several approaches for  the introduction of this stochastic integral. Essentially, these are via Wiener-It\^o chaos expansion, as the adjoint operator of the Malliavin derivative (cf. \cite[1.3]{Nualart}, \cite[7.3]{Janson}) or via the following transform characterizations motivated by white noise integrals, cp. \cite[Theorem 16.46, Theorem 16.50]{Janson}.

We denote the Wiener integral of a function $f\in L^2([0,\infty))$ by $I(f)$, which is the continuous linear extension of the mapping $\eins_{(0,t]}\mapsto B_t$ from $L^2([0,\infty))$ to $L^2(\Omega) := L^2(\Omega,\mathcal{F},P)$. Moreover, we denote the norm and inner product on $L^2([0,\infty))$ by $\|\cdot \|$ and $\langle \cdot, \cdot\rangle$ and by $\mathcal{E}$ the set of step functions on left half-open intervals, i.e., functions of the form
$$
g(x)= \sum\limits_{j=1}^{m} a_j \eins_{(b_j,c_j]}(x),\quad m\in \NN, a_j \in \RR, 0\leq b_j < c_j.
$$
For every $X \in L^2(\Omega)$ and $f \in L^2([0,\infty))$, the \emph{S-transform} of $X$ at $f$ is defined as
\[
(SX)(f) := \ex[X \exp\left(I(f) - \|f\|^2/2\right)]. 
\]
As the set of stochastic exponentials, also known as Wick exponentials,
\begin{equation}\label{eq:WickExponential}
\exp^{\diamond}(I(g)) := \exp\left(I(g) - \|g\|^2/2\right)\ , \quad g \in \mathcal{E} 
\end{equation}
is a total set in in $L^2(\Omega)$ (see e.g.\cite[Corollary 3.40]{Janson}), every random variable in $L^2(\Omega)$ is uniquely determined by its S-transform on $\mathcal{E}$, i.e. for $X, Y\in L^2(\Omega)$,
$
\forall g\in \mathcal{E} : (SX)(g)=(SY)(g)
$
implies $X=Y$ $P$-almost surely. The S-transform is a continuous and injective function on $L^2(\Omega)$ (see e.g. \cite[Chapter 16]{Janson} for more details). As an example, for $f,g \in L^2([0,\infty))$, we have $(S \ \exp\left(I(f) - \|f\|^2/2\right))(g) = \exp\left(\langle f,g\rangle\right)$. We denote for every $p\geq 1$, 
\[
L^p(\Omega\times [0,\infty)):= L^p(\Omega \times [0,\infty), \mathcal{F} \otimes \mathcal{B}([0,\infty)), P \otimes \lambda_{[0,\infty)}). 
\]
The characterization of random variables via the S-transform can be used to introduce the Hitsuda-Skorokhod integral,  cp. \cite[Theorem 16.46, Theorem 16.50]{Janson}:
\begin{definition}\label{def:Skorokhod}
The process $u = (u_t)_{t \geq 0}\in L^2(\Omega\times [0,\infty))$ is said to belong to the domain $D(\delta)$ of the Hitsuda-Skorokhod integral, if there is an $X\in L^2(\Omega)$ such that for every $g\in \mathcal{E}$ we have
$$
(SX)(g)=\int_0^\infty (Su_t)(g)g(t)dt.
$$
In this case, $X$ is uniquely determined and $\delta(u):=X$ is called the {\it Hitsuda-Skorokhod integral of $u$}.
\end{definition}

Moreover, the characterization via S-transform enables us to introduce stochastic integrals with respect to processes beyond semimartingales, like fractional Brownian motion, see e.g. \cite{Bender}.

The S-transform is closely related to a product imitating uncorrelated random variables as $\ex[X \diamond Y]=\ex[X]\ex[Y]$, which is implicitly contained in the Hitsuda-Skorokhod integral and a fundamental tool in stochastic analysis. Due to the injectivity of the S-transform, the \emph{Wick product} can be introduced via
\[
\forall f \in L^2([0,\infty)) \ : \ S(X \diamond Y)(f) = (S X)(f) (S Y)(f)
\]
on a dense subset in $L^{2}(\Omega) \times L^{2}(\Omega)$. For more details on Wick product we refer to \cite{Janson, Holden_Buch, Kuo}. In particular, for a Wiener Integral $I(f)$, Hermite polynomials play the role of monomials in standard calculus as $I(f)^{\diamond k} = h^k_{\|f\|^2}(I(f))$ and the Wick exponential can be reformulated as
\begin{equation}\label{eq:WickExWickPowerSeries}
\exp^{\diamond}(I(f)) =\sum_{k=0}^{\infty} \frac{1}{k!}I(f)^{\diamond k}. 
\end{equation}

Now we prove that the elementary Ayew-Kuo integral in Definition \ref{def:ElemNewIntegral} extends the Hitsuda-Skorokhod integral in Definition \ref{def:Skorokhod} in the setting of Definition \ref{def:ElemNewIntegral}.

\begin{theorem}\label{thm:ElementaryNewIntIsSkorohod}
Suppose $[a,b] \subset [0,\infty)$, an adapted $L^2$-continuous stochastic process $f$ and an instantly independent $L^2$-continuous process $\varphi$ such that the sequence
\begin{equation*}
\sum_{i=1}^{n} f(t_{i-1}) \varphi(t_i) \Delta B_{i},
\end{equation*}
converges strongly in $L^2(\Omega)$ as $\|\Delta_n\|$ tends to zero (which are stricter assumptions than in Definition \ref{def:ElemNewIntegral}). Then the limit $\bar{I}(f \varphi)$ equals the Hitsuda-Skorokhod integral $\delta(f \varphi)$.
\end{theorem}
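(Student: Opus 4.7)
The goal is to verify that $\bar{I}(f\varphi)$ satisfies the defining S-transform identity of Definition \ref{def:Skorokhod}, namely
\[
(S\bar{I}(f\varphi))(g) = \int_a^b (S(f(t)\varphi(t)))(g)\, g(t)\, dt \qquad \forall g \in \mathcal{E}.
\]
Once this is established, $f\varphi$ lies in $D(\delta)$ and $\delta(f\varphi)=\bar{I}(f\varphi)$. A preliminary check is that $f\varphi \in L^2(\Omega\times[a,b])$, which follows from the independence of $f(t)$ and $\varphi(t)$ (so that $\|f(t)\varphi(t)\|_2^2 = \|f(t)\|_2^2\|\varphi(t)\|_2^2$) combined with the $L^2$-continuity on the compact interval $[a,b]$.

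The plan is to compute the S-transform of the Riemann sum explicitly and then pass to the limit. Fix $g \in \mathcal{E}$ and split $g = g_{i-1} + g_{i,i} + g_i^+$ into the parts supported on $(0,t_{i-1}]$, $(t_{i-1},t_i]$ and $(t_i,\infty)$. Since the three supports are disjoint, the Wick exponential factorizes as $\exp^{\diamond}(I(g)) = \exp^{\diamond}(I(g_{i-1}))\exp^{\diamond}(I(g_{i,i}))\exp^{\diamond}(I(g_i^+))$. The first factor together with $f(t_{i-1})$ is $\mathcal{F}_{t_{i-1}}$-measurable, the second factor together with $\Delta B_i$ depends only on the increment over $(t_{i-1},t_i]$, and the third factor together with $\varphi(t_i)$ is independent of $\mathcal{F}_{t_i}$. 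These three blocks are therefore mutually independent, which gives
\[
\ex\bigl[f(t_{i-1})\varphi(t_i)\Delta B_i \exp^{\diamond}(I(g))\bigr] = (Sf(t_{i-1}))(g)\,(S\varphi(t_i))(g)\int_{t_{i-1}}^{t_i} g(s)\,ds,
\]
where the middle factor uses the standard identity $\ex[I(h)\exp^{\diamond}(I(k))] = \langle h,k\rangle$ applied with $h=\eins_{(t_{i-1},t_i]}$ and $k=g_{i,i}$. The same block-independence argument yields $(S(f(t_{i-1})\varphi(t_i)))(g) = (Sf(t_{i-1}))(g)(S\varphi(t_i))(g)$, so summing over $i$ gives
\[
\Bigl(S\sum_{i=1}^{n} f(t_{i-1})\varphi(t_i)\Delta B_i\Bigr)(g) = \sum_{i=1}^{n}(S(f(t_{i-1})\varphi(t_i)))(g)\int_{t_{i-1}}^{t_i} g(s)\,ds.
\]

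Passing to the limit $\|\Delta_n\|\to 0$ on the left-hand side uses the continuity bound $|(SX)(g)|\leq \|X\|_2\|\exp^{\diamond}(I(g))\|_2$ and the hypothesis of strong $L^2(\Omega)$-convergence of the Riemann sums, giving $(S\bar{I}(f\varphi))(g)$. For the right-hand side, the $L^2$-continuity of $f$ and $\varphi$ propagates to $L^2$-continuity of $t\mapsto f(t)\varphi(t)$, hence to continuity of $t\mapsto (S(f(t)\varphi(t)))(g)$ by the same S-transform bound; a standard estimate with the staggered sampling at $t_{i-1}$ and $t_i$ then identifies the limit as $\int_a^b (S(f(t)\varphi(t)))(g)g(t)\,dt$, completing the argument.

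\textbf{Main obstacle.} The only subtle point is the factorization above: although $\varphi(t_i)$ is independent of $\mathcal{F}_{t_i}$ and $\exp^{\diamond}(I(g_i^+))$ is independent of $\mathcal{F}_{t_i}$, these two objects need not be independent of each other, since both are measurable with respect to the post-$t_i$ part of the Brownian filtration. Independence within the third block is, however, not required; one only needs the third block as a whole to be independent of the combined first two blocks, which is immediate from $\varphi(t_i)\perp\!\!\!\perp \mathcal{F}_{t_i}$ and $\exp^{\diamond}(I(g_i^+))\perp\!\!\!\perp \mathcal{F}_{t_i}$. Recognising this saves the factorization and, with it, the whole proof.
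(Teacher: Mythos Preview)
Your overall strategy---compute the S-transform of each Riemann sum, recognise it as a Riemann sum for the right-hand side of the Skorokhod identity, and pass to the limit via $L^2$-continuity of the S-transform---is precisely the paper's. The difference lies in how the key factorization of $\ex[f(t_{i-1})\varphi(t_i)\Delta B_i\,\exp^{\diamond}(I(g))]$ is justified, and here your argument has a genuine gap.

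Your resolution of the ``main obstacle'' is a non-sequitur: from $\varphi(t_i)\perp\!\!\!\perp\mathcal{F}_{t_i}$ and $\exp^{\diamond}(I(g_i^+))\perp\!\!\!\perp\mathcal{F}_{t_i}$ it does \emph{not} follow that the product (or the pair) is independent of $\mathcal{F}_{t_i}$. On this very Wiener space, take $Z=B_{2t}-B_t$ and $W=\mathrm{sign}(B_{t/2})$: both $WZ$ and $Z$ are individually independent of $\mathcal{F}_t$, yet $WZ\cdot Z=WZ^2$ satisfies $\ex[WZ^2\mid\mathcal{F}_t]=W$, hence is not. What you actually need is that $\varphi(t_i)$ be \emph{measurable} with respect to the future $\sigma$-field $\sigma(B_s-B_{t_i}:s\ge t_i)$; once that is in hand, the entire third block is future-measurable and the three-block independence becomes honest. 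The paper supplies exactly this missing ingredient, but via Wiener--It\^o chaos rather than a bare independence claim: it argues that instant independence forces the chaos kernels of $\varphi(t)$ to be supported on $\{t_1\wedge\cdots\wedge t_k\ge t\}$, so that $f(t_{i-1})$, $\varphi(t_i)$ and $\Delta B_i$ have chaoses on pairwise disjoint time supports, the ordinary product coincides with the Wick product $f(t_{i-1})\diamond\varphi(t_i)\diamond\Delta B_i$, and the S-transform is then multiplicative. That chaos/Wick-product step is what replaces your block-independence argument and is the substantive device your proof is missing.
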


The proof is reduced to the following characterization of strong convergence in terms of the S-transform. We will need
\begin{equation}\label{eq:MomentWickExp}
\ex[(e^{\diamond I(g)})^p] = \exp\left(\frac{p^2-p}{2}\|g\|^2\right) 
\end{equation}
for all $p >0$, see e.g.\cite[Cor. 3.38]{Janson}.

\begin{proposition}\label{prop:StransformL2}
Suppose $X, X_n \in L^2(\Omega)$ for every $n\in\NN$ and $\ex[(X_n)^2]  \rightarrow \ex[X^2]$. Then the following assertions are equivalent as $n$ tends to infinity:
 \begin{enumerate}
  \item[(i)] $X_n \rightarrow X$  strongly in $L^2(\Omega)$.
    \item[(ii)] 
     $(S X_n)(g) \rightarrow (S X)(g)$ for every $g\in \mathcal{E}$.
 \end{enumerate}
\end{proposition}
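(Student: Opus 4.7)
The plan is to prove the two directions separately, with the forward direction being a direct continuity argument and the reverse direction passing through weak convergence.

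For (i) $\Rightarrow$ (ii): By \eqref{eq:MomentWickExp}, for each $g \in \mathcal{E}$ the Wick exponential $\exp^{\diamond}(I(g))$ lies in $L^2(\Omega)$. Hence $Y \mapsto \ex[Y \exp^{\diamond}(I(g))] = (SY)(g)$ is a bounded linear functional on $L^2(\Omega)$, and strong convergence $X_n \to X$ immediately yields $(SX_n)(g) \to (SX)(g)$.

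For (ii) $\Rightarrow$ (i): The strategy is to deduce weak $L^2$-convergence from (ii) and then combine it with the norm convergence hypothesis $\|X_n\|_{L^2} \to \|X\|_{L^2}$ via the standard Hilbert space identity
\[
\|X_n - X\|_{L^2}^2 = \|X_n\|_{L^2}^2 - 2\,\ex[X_n X] + \|X\|_{L^2}^2.
\]
To establish weak convergence, I would first note that $\|X_n\|_{L^2}$ is bounded (as it converges). Then I would use that the linear span $\mathcal{S}$ of $\{\exp^{\diamond}(I(g)) : g \in \mathcal{E}\}$ is dense in $L^2(\Omega)$ (already stated in the excerpt). Assumption (ii) together with linearity gives $\ex[X_n Z] \to \ex[X Z]$ for every $Z \in \mathcal{S}$.

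The only slightly nontrivial step is upgrading convergence on the dense set $\mathcal{S}$ to convergence against arbitrary $Y \in L^2(\Omega)$. This is a routine three-epsilon argument using the uniform bound $M := \sup_n \|X_n\|_{L^2} < \infty$: given $Y \in L^2(\Omega)$ and $\varepsilon > 0$, choose $Z \in \mathcal{S}$ with $\|Y - Z\|_{L^2} < \varepsilon/(3M + 3\|X\|_{L^2} + 1)$, then
\[
|\ex[(X_n - X)Y]| \leq |\ex[(X_n - X)Z]| + (\|X_n\|_{L^2} + \|X\|_{L^2})\,\|Y - Z\|_{L^2},
\]
and the first term goes to zero by the convergence on $\mathcal{S}$. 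Hence $\ex[X_n Y] \to \ex[X Y]$ for all $Y \in L^2(\Omega)$, in particular for $Y = X$, and plugging into the identity above gives $\|X_n - X\|_{L^2}^2 \to 0$. The norm convergence hypothesis is what makes this direction go through; without it one only obtains weak convergence, so I expect no serious obstacle, merely care in invoking density and the uniform bound.
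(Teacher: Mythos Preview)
Your proof is correct and follows essentially the same route as the paper: in both, (i)$\Rightarrow$(ii) is the $L^2$-continuity of the S-transform via Cauchy--Schwarz, and (ii)$\Rightarrow$(i) proceeds by first obtaining weak convergence from totality of the Wick exponentials together with the norm bound, then upgrading to strong convergence using the hypothesis $\|X_n\|_{L^2}\to\|X\|_{L^2}$. The only cosmetic difference is that the paper invokes a reference (Yosida, Theorem~V.1.3) for the density-plus-boundedness step you carry out explicitly with a three-epsilon argument.
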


\begin{proof}
$(i) \rightarrow (ii)$: \ This is a direct consequence of the $L^2$-continuity of the S-transform by the Cauchy-Schwarz inequality and \eqref{eq:MomentWickExp},
\begin{equation}\label{eq:SIsCont}
|(S X_n)(g) - (S X)(g)| = |\ex[(X_n-X) \exp^{\diamond}(I(g))]| \leq \ex[(X_n-X)^2]^{1/2}\exp(\|g\|^2/2) \rightarrow 0. 
\end{equation}
$(ii) \rightarrow (i)$: \ Thanks to the total set of Wick exponentials \eqref{eq:WickExponential} and \cite[Theorem V. 1.3]{Yosida}, $X_n$ converges weakly to $X$ in the Hilbert space $L^2(\Omega)$. As the norms converge, we conclude strong convergence as well.
\end{proof}

\begin{proof}[Proof of Theorem \ref{thm:ElementaryNewIntIsSkorohod}]
 The Wiener-It\^o chaos expansion of the $\mathcal{F}_t$-measurable random variables $f(t)$ must be of the form
\begin{align*}
f(t) = \sum_{k\geq 0} I^k(f^k(t_1,\ldots, t_k,t)),
\end{align*}
where the integrands are symmetric functions in $L^2([a,b]^{k+1};\RR)$
$$
f^k(t_1,\ldots, t_k,t) = \eins_{\{t_1 \vee \cdots \vee t_k \leq t\}}f^k(t_1,\ldots, t_k,t),
$$ 
 (cf. \cite[Lemma 2.5.2]{Holden_Buch}). Similarly, due to the instant independence, the Wiener chaos expansion of $\varphi(t)$ is given by
$$
\varphi(t) = \sum_{k\geq 0} I^k(f_{\varphi}^k(t_1,\ldots, t_k,t)),
$$
where the integrands are
$$
f_{\varphi}^k(t_1,\ldots, t_k,t) = \eins_{\{t_1 \wedge \cdots \wedge t_k \geq t\}}f_{\varphi}^k(t_1,\ldots, t_k,t)
$$
(see e.g. the projection as an influence on the integrands in \cite[Lemma 1.2.5]{Nualart}). Hence, all chaoses of $f(t)$ and $\varphi(t)$ are based on disjoint increments of the underlying Brownian motion and we conclude by \cite[Proposition 2.4.2]{Holden_Buch} for all partitions
$$
f(t_{i-1}) \varphi(t_i) \Delta B_i = f(t_{i-1}) \diamond \varphi(t_i) \diamond \Delta B_i
$$
and for all $t \in [a,b]$,
\begin{equation}\label{AKIntegrandWick}
f(t) \varphi(t) = f(t) \diamond \varphi(t). 
\end{equation}

Therefore, for every partition and $g \in \mathcal{E}$, we have
\begin{align*}
\left(S \ \left(\sum_{i=1}^{n} f(t_{i-1}) \varphi(t_i) \Delta B_{i}\right)\right)(g) &= \sum_{i=1}^{n} \left(S ( f(t_{i-1}) \diamond \varphi(t_i) \diamond \Delta B_i)\right)(g)\\
&= \sum_{i=1}^{n} (S f(t_{i-1}))(g) (S \varphi(t_i))(g) (S \Delta B_i)(g) \\
&= \sum_{i=1}^{n} (S f(t_{i-1}))(g) (S \varphi(t_i))(g) (g(t_i) - g(t_{i-1})).
\end{align*}
The $L^2$-continuity of the stochastic process $f$ gives via \eqref{eq:SIsCont}
\begin{equation*}
|(S (f(x))(g) - (S f(x_n))(g)| \leq \ex[|f(x)-f(x_n)|^2]^{1/2}\exp(\|g\|^2/2) \rightarrow 0 
\end{equation*}
as $x_n \rightarrow x$. Analogously we conclude the continuity of $(S \varphi(\cdot))(g)$ for every fixed $g$. Therefore, the function $(S f(\cdot))(g) (S \varphi(\cdot))(g) g(\cdot)$ is piecewise continuous and Riemann-integrable and the Riemann sum $\sum_{i=1}^{n} (S f(t_{i-1}))(g) (S \varphi(t_i))(g) (g(t_i) - g(t_{i-1}))$ converges for $\| \Delta_n\| \rightarrow 0$ to the Riemann integral
$$
\int_{a}^{b} (S f(t))(g) (S \varphi(t))(g) g(t) dt.
$$
Thus, via \eqref{AKIntegrandWick} and Definition \ref{def:Skorokhod}, for all $g \in \mathcal{E}$ we conclude,
\begin{align*}
\lim_{n \rightarrow \infty} \left(S \ \sum_{i=1}^{n} f(t_{i-1}) \varphi(t_i) \Delta B_{i}\right)(g) &= \int_{a}^{b} (S f(t))(g) (S \varphi(t))(g) g(t) dt\\
&=\int_{a}^{b} (S f(t) \varphi(t))(g) g(t) dt = (S \, \delta(f \varphi))(g). 
\end{align*}
Thanks to strong convergence in \eqref{eq:NewIntegral1} and Proposition \ref{prop:StransformL2}, we conclude $\bar{I}(f\varphi) = \delta(f \varphi)$.
\end{proof}

\begin{remark}\label{rem:ElemNewIntNotSkor}
A simple example of an elementary Ayew-Kuo integral which is not a Hitsuda-Skorokhod integral, is given by the integrand $f(t) = \exp(B_t^2)$, $\varphi(t) = \exp((B_1-B_t)^2)$, i.e. the Ayew-Kuo integral
\[
\bar{I}\left(\exp(2B_t^2-2B_tB_1 +B_1^2))_{t \in [0,1]}\right)
\]
exists. Thanks to Fubini's theorem applied on a sum of nonnegative terms, $\frac{(2k-1)!!}{k!} = \frac{(2k)!}{2^k (k!)^2} = 2^k\binom{-1/2}{k}$ and Newton's binomial theorem, we have
\begin{align*}
\ex[\exp(p B_t^2)] = \sum_{k=0}^{\infty} \frac{p^k}{k!} \ex[B_t^{2k}] = \sum_{k=0}^{\infty} \frac{(2k-1)!!}{k!} (pt)^k = \sum_{k=0}^{\infty} \binom{-1/2}{k} (2pt)^k = \frac{1}{\sqrt{1-2pt}}. 
\end{align*}
Hence, $\exp(B_t^2)\in L^p(\Omega)$ if and only if \ $t \in [0,1/(2p)]$. Therefore, the integrand $\left(f(t) \varphi(t) \eins_{[0,1]}(t)\right)_{t \geq 0}$ is not in $D(\delta) \subset L^2(\Omega \times [0,\infty))$.
\end{remark}

\section{Extensions of the stochastic integrals}\label{section:Extensions}

The example in Remark \ref{rem:ElemNewIntNotSkor} indicates that the Ayew-Kuo integral extends the Hitsuda-Skorokhod integral due to the missing square-integrability. 

However, this disadvantage can be removed by the following extension of the Hitsuda-Skorokhod integral.

For a measure space $(\Omega,\mathcal{A},\mu)$, we denote by $L^0(\Omega,\mathcal{A},P)$ the space $(\Omega,\mathcal{A},\mu)$ with the topology of convergence in measure. 
Due to \cite[Corollary 3.40]{Janson}, for every $p \geq 0$, we have that the set of Wick exponentials \eqref{eq:WickExponential} is total in $L^p(\Omega,\mathcal{F},P)$. 

Moreover, as the set $\mathcal{E}$ is dense in $L^p([0,\infty), \mathcal{B}([0,\infty)),\lambda_{[0,\infty)})$ for every $p >0$, we conclude via a straightforward argument on products of dense sets the crucial fact:

\begin{proposition}\label{prop:Ldense}
For every $p \geq 0$, the linear span
\[
\mathcal{E}^{Exp} := {\rm lin}\left\{\exp^{\diamond}(I(g)) \otimes h\, : \; g, h\in \mathcal{E}\right\}. 
\]
is dense in the space $L^p(\Omega\times [0,\infty))$ (as a topological vector space).
\end{proposition}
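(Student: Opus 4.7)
The plan is to follow a classical product-of-dense-sets argument, factoring the approximation of an arbitrary element of $L^p(\Omega \times [0,\infty))$ into a random-variable part and a deterministic time-part. The underlying principle is that whenever $A \subset L^p(\Omega)$ and $B \subset L^p([0,\infty))$ are dense, the algebraic tensor product $\mathrm{lin}\{a \otimes b : a \in A, b \in B\}$ is dense in $L^p(\Omega \times [0,\infty))$. In our situation we will take $A$ to be the linear span of Wick exponentials (dense by \cite[Corollary 3.40]{Janson}) and $B = \mathcal{E}$ (dense in $L^p([0,\infty))$ by standard approximation of integrable functions by step functions on left half-open intervals).

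First I would reduce to pure tensors $X \otimes h$ with $X \in L^p(\Omega)$, $h \in L^p([0,\infty))$. For $p \geq 1$ this is a classical monotone-class argument: indicators of measurable rectangles $F \times J$ are themselves pure tensors, and their linear combinations approximate indicators of $\mathcal{F} \otimes \mathcal{B}([0,\infty))$-measurable sets of finite measure, hence are dense in $L^p(\Omega \times [0,\infty))$. For $0 \leq p < 1$ the same reduction goes through after truncating to bounded slabs $\Omega \times [0,N]$ (of finite product measure) and patching together.

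Given a pure tensor $X \otimes h$, I would pick finite linear combinations
$$X_n = \sum_{i=1}^{k_n} \alpha_{i,n} \exp^{\diamond}(I(g_{i,n})), \quad g_{i,n} \in \mathcal{E},$$
with $X_n \to X$ in $L^p(\Omega)$, and simultaneously $h_n \in \mathcal{E}$ with $h_n \to h$ in $L^p([0,\infty))$. The candidate approximant is
$$X_n \otimes h_n = \sum_{i=1}^{k_n} \alpha_{i,n}\, \exp^{\diamond}(I(g_{i,n})) \otimes h_n \in \mathcal{E}^{Exp}.$$
For $p \geq 1$ the Fubini identity $\|Y \otimes k\|_{L^p(\Omega \times [0,\infty))} = \|Y\|_{L^p(\Omega)} \|k\|_{L^p([0,\infty))}$ together with the triangle inequality yields
$$\|X_n \otimes h_n - X \otimes h\|_{L^p} \leq \|X_n - X\|_{L^p(\Omega)} \|h_n\|_{L^p([0,\infty))} + \|X\|_{L^p(\Omega)} \|h_n - h\|_{L^p([0,\infty))},$$
and the right-hand side tends to zero since $\|h_n\|_{L^p([0,\infty))}$ stays bounded.

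The main technical obstacle I anticipate is the case $p = 0$ (and, to a lesser extent, $p \in (0,1)$), where the tidy norm identity above is unavailable and one is working with convergence in measure on an infinite-measure space. My proposed workaround is to approximate locally on each slab $\Omega \times [0,N]$, where convergence in measure follows from convergence in any $L^q$ with $q > 0$ (so the $p \geq 1$ argument can be reused), and then to exploit that convergence in measure on every slab of finite measure is equivalent to convergence in measure on the $\sigma$-finite product. Apart from this topological bookkeeping the proof is entirely routine, which matches the author's remark that the result follows by a standard product-of-dense-sets consideration.
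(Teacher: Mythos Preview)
Your proposal is correct and matches the paper's approach exactly: the paper does not spell out a proof but simply states that the result follows ``via a straightforward argument on products of dense sets'' from the totality of Wick exponentials in $L^p(\Omega)$ (\cite[Corollary 3.40]{Janson}) and the density of $\mathcal{E}$ in $L^p([0,\infty))$. Your write-up is precisely this argument with the details filled in, including the extra care for $p<1$ that the paper omits.
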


Due to the totality of the Wick exponentials \eqref{eq:WickExponential} in $L^p(\Omega,\mathcal{F},P)$, the S-transform characterization of the Hitsuda-Skorokhod integral in Definition \ref{def:Skorokhod} can be extended to $L^p(\Omega\times [0,\infty))$ for $p >1$, see e.g. \cite[Section 16.4 and in particular Theorem 16.64]{Janson}:

\begin{definition}\label{def:SkorokhodLp}
The process $u = (u_t)_{t \geq 0}\in L^p(\Omega\times [0,\infty))$ is said to belong to the domain $D(\delta)$ of the Hitsuda-Skorokhod integral, if there is an $X\in L^p(\Omega,\mathcal{F},P)$ such that for every $g\in \mathcal{E}$
$$
(SX)(g)=\int_0^\infty (Su_t)(g)g(t)dt.
$$
In this case, $X$ is uniquely determined and $\delta(u):=X$ is called the {\it $L^p$-Hitsuda-Skorokhod integral of $u$}.
\end{definition}

Here, the condition $p >1$ ensures that all integrals involved exist, i.e. are finite and characterize the random elements uniquely.

Moreover, this approach can be extended to $L^1(\Omega\times [0,\infty))$ via an further transform (cf. \cite[Definition 16.52]{Janson}).

We observe the straightforward extension of Proposition \ref{prop:StransformL2} to $p \in (1,\infty)$:

\begin{proposition}\label{prop:StransformLp}
Suppose $X, X_n \in L^p(\Omega)$ for every $n\in\NN$ and $\ex[(X_n)^p]  \rightarrow \ex[X^p]$. Then the following assertions are equivalent as $n$ tends to infinity:
 \begin{enumerate}
  \item[(i)] $X_n \rightarrow X$  strongly in $L^p(\Omega)$.
    \item[(ii)] 
     $(S X_n)(g) \rightarrow (S X)(g)$ for every $g\in \mathcal{E}$.
 \end{enumerate}
\end{proposition}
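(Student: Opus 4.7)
The plan is to adapt the two implications of Proposition \ref{prop:StransformL2} to the Banach space setting of $L^p(\Omega)$ for $p \in (1,\infty)$, replacing the Cauchy--Schwarz inequality by Hölder and the Hilbert-space tools (weak compactness and the inner-product identity for norm convergence) by reflexivity and the Radon--Riesz (Kadets--Klee) property of uniformly convex spaces. Both ingredients are classical properties of $L^p(\Omega)$ for $p \in (1,\infty)$.

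For $(i) \Rightarrow (ii)$, I would set $q := p/(p-1)$ and apply Hölder's inequality to obtain
$$
|(SX_n)(g) - (SX)(g)| \leq \ex[|X_n - X|^p]^{1/p} \, \ex\bigl[(\exp^{\diamond}(I(g)))^q\bigr]^{1/q}.
$$
The moment identity \eqref{eq:MomentWickExp} evaluates the second factor to $\exp((q-1)\|g\|^2/2)$, which is finite, so strong $L^p$-convergence of $X_n$ forces pointwise convergence of the S-transform on $\mathcal{E}$.

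For $(ii) \Rightarrow (i)$, the convergence $\ex[|X_n|^p] \to \ex[|X|^p]$ first provides boundedness of $(X_n)_n$ in $L^p(\Omega)$. Since $L^p(\Omega)$ is reflexive, every subsequence of $(X_n)$ admits a further weakly convergent subsequence with some limit $Y \in L^p(\Omega)$. For each $g \in \mathcal{E}$, the Wick exponential $\exp^{\diamond}(I(g))$ lies in $L^q(\Omega)$ by \eqref{eq:MomentWickExp}, so
$$
\ex[Y \exp^{\diamond}(I(g))] = \lim_{n \to \infty} (SX_n)(g) = (SX)(g) = \ex[X \exp^{\diamond}(I(g))].
$$
Totality of the Wick exponentials \eqref{eq:WickExponential} in $L^q(\Omega)$, recalled at the start of Section \ref{section:Extensions}, forces $Y = X$, and a standard subsequence argument upgrades this to weak convergence $X_n \rightharpoonup X$ of the whole sequence. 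Combined with the hypothesis of norm convergence and the uniform convexity of $L^p(\Omega)$, the Radon--Riesz property then delivers strong convergence $X_n \to X$ in $L^p(\Omega)$.

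The main obstacle is conceptual rather than computational: the $p=2$ proof exploits the Hilbert-space identity $\|X_n - X\|^2 = \|X_n\|^2 - 2\langle X_n, X\rangle + \|X\|^2$ to pass from weak plus norm convergence to strong convergence, and this is unavailable for $p \neq 2$. The Radon--Riesz property of the uniformly convex space $L^p(\Omega)$ serves as the exact substitute; no new analytical difficulty appears, because all estimates remain quantitative and the $L^q$-norms of Wick exponentials are explicitly controlled by \eqref{eq:MomentWickExp}.
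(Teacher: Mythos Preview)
Your proposal is correct and follows essentially the same route as the paper. The paper's proof of $(i)\Rightarrow(ii)$ is the identical H\"older estimate, and for $(ii)\Rightarrow(i)$ it also first obtains weak convergence from totality of the Wick exponentials (citing \cite[Theorem V.1.3]{Yosida} rather than spelling out the reflexivity/subsequence argument) and then upgrades to strong convergence via the Radon--Riesz property of $L^p$ (citing \cite[Corollary 4.7.16]{Bogachev}); your version is simply a more explicit unpacking of the same two steps.
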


\begin{proof}
$(i) \rightarrow (ii)$: It is again a direct consequence of the $L^p$-continuity of the S-transform via H\"older-inequality and \eqref{eq:MomentWickExp},
\begin{equation}\label{eq:SIsLpCont}
|(S X_n)(g) - (S X)(g)| \leq \ex[(X_n-X)^p]^{1/p}\exp\left(\frac{\|g\|^2}{2(p-1)}\right) \rightarrow 0. 
\end{equation}
$(ii) \rightarrow (i)$: \ $X_n$ converges weakly to $X$ in the normed space $L^p(\Omega)$ via \cite[Theorem V. 1.3]{Yosida}. As the norms converge, we conclude via \cite[Corollary 4.7.16]{Bogachev} norm convergence in $L^p(\Omega)$ as well.
\end{proof}

The we conclude similarly for every $p>1$:

\begin{theorem}\label{thm:ElementaryNewIntIsSkorohodLp}
Suppose $[a,b] \subset [0,\infty)$, an adapted continuous stochastic process $f$ and an instantly independent continuous process $\varphi$ such that the sequence
\begin{equation*}
\sum_{i=1}^{n} f(t_{i-1}) \varphi(t_i) \Delta B_{i},
\end{equation*}
converges strongly in $L^p(\Omega)$ as $\|\Delta_n\|$ tends to zero. Then the limit $\bar{I}(f \varphi)$ equals the Hitsuda-Skorokhod integral $\delta(f \varphi)$ in Definition \ref{def:SkorokhodLp}.
\end{theorem}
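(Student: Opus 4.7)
The plan is to mirror the proof of Theorem \ref{thm:ElementaryNewIntIsSkorohod} essentially verbatim, replacing the role of Proposition \ref{prop:StransformL2} by its $L^p$-analogue in Proposition \ref{prop:StransformLp} and the $L^2$-bound in \eqref{eq:SIsCont} by the $L^p$-bound in \eqref{eq:SIsLpCont}. The algebraic backbone of the $L^2$-argument is purely distributional: it depends only on the chaos decompositions of $f(t)$ and $\varphi(t)$ and on the multiplicativity of the S-transform over Wick products, neither of which sees the choice of $p$.

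First, I would record the chaos expansions exactly as in the proof of Theorem \ref{thm:ElementaryNewIntIsSkorohod}. Since $f(t)$ is $\mathcal{F}_t$-measurable, its chaos kernels are supported on $\{t_1\vee\cdots\vee t_k\leq t\}$; instant independence forces the kernels of $\varphi(t)$ to be supported on $\{t_1\wedge\cdots\wedge t_k\geq t\}$. The disjointness of these supports implies, by \cite[Proposition 2.4.2]{Holden_Buch}, both $f(t)\varphi(t) = f(t)\diamond\varphi(t)$ and $f(t_{i-1})\varphi(t_i)\Delta B_i = f(t_{i-1})\diamond\varphi(t_i)\diamond\Delta B_i$ for every partition.

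Next, for each fixed $g\in\mathcal{E}$, the multiplicativity of the S-transform over Wick products yields
\[
\Bigl(S\sum_{i=1}^n f(t_{i-1})\varphi(t_i)\Delta B_i\Bigr)(g)=\sum_{i=1}^n (Sf(t_{i-1}))(g)(S\varphi(t_i))(g)\bigl(g(t_i)-g(t_{i-1})\bigr).
\]
The $L^p$-continuity bound \eqref{eq:SIsLpCont} turns the $L^p$-continuity of $f$ and $\varphi$ into ordinary continuity of the scalar maps $t\mapsto (Sf(t))(g)$ and $t\mapsto (S\varphi(t))(g)$. The integrand on the right is thus piecewise continuous and Riemann-integrable, so as $\|\Delta_n\|\to 0$ the Riemann sum converges to $\int_a^b (Sf(t))(g)(S\varphi(t))(g) g(t)\,dt$, which by the Wick-product identity above equals $\int_a^b (S(f(t)\varphi(t)))(g) g(t)\,dt$.

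Finally, the assumed strong $L^p$-convergence of the Riemann sums to $\bar{I}(f\varphi)$ gives convergence of the $p$-th moments, so Proposition \ref{prop:StransformLp} identifies the pointwise limit in $g$ of the S-transforms with $(S\,\bar{I}(f\varphi))(g)$. Comparing with Definition \ref{def:SkorokhodLp} shows that $\bar{I}(f\varphi)$ satisfies the defining S-transform identity of $\delta(f\varphi)$, whence $\bar{I}(f\varphi)=\delta(f\varphi)$. The only delicate point is interpreting the bare word ``continuous'' in the statement: in order to apply \eqref{eq:SIsLpCont} one really needs $L^p$-continuity (or $L^q$-continuity for some $q>1$ dual to $p$), and one should also justify that $f\varphi$ defines an element of $L^p(\Omega\times[0,\infty))$ so that Definition \ref{def:SkorokhodLp} is applicable; both follow from the hypothesis of strong $L^p$-convergence of the Riemann sums combined with continuity of the processes, but it is worth noting this explicitly.
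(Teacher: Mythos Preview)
Your proposal is correct and matches the paper's approach exactly: the paper's own proof is the single sentence ``The proof follows the lines of the proof of Theorem \ref{thm:ElementaryNewIntIsSkorohod} making use of Proposition \ref{prop:StransformLp} and is omitted,'' and you have carried out precisely this substitution. Your closing remarks about the need for $L^p$-continuity of $f,\varphi$ and for $f\varphi\in L^p(\Omega\times[0,\infty))$ are reasonable caveats that the paper itself leaves implicit.
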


The proof follows the lines of the proof of Theorem \ref{thm:ElementaryNewIntIsSkorohod} making use of Proposition \ref{prop:StransformLp} and is omitted.\\

In a series of articles including \cite{HKSZ}, the elementary Ayew-Kuo integral is extended as follows:

\begin{definition}\label{def:NewIntegral}
Let $[a,b] \subset [0,\infty)$. Suppose a sequence $\Phi_n$ in the linear span of the integrands of the elementary Ayew-Kuo integral and a stochastic process $\Phi: \Omega\times [a,b] \rightarrow \RR$ satisfying the condition, as $n$ tends to infinity,
\[
\int_{a}^{b}|\Phi_n(t) - \Phi(t)|^2 dt \rightarrow 0 \quad a.s. 
\]
Then, provided the limit on the right hand side exists in probability, the \emph{Ayew-Kuo integral} of $\Phi$ is defined as 
\[
\bar{I}(\Phi) = \lim_{n \rightarrow \infty} \bar{I}(\Phi_n) .
\]
\end{definition}

As the characterization via S-transform is essentially based on the elements in $\mathcal{E}^{Exp}$, here we define for every $p \geq 1$:

\begin{definition}\label{def:NewSkorohod}
We define the Hitsuda-Skorokhod integral on the set $\mathcal{E}^{Exp}$
via 
\[
\delta(\exp^{\diamond}(I(g)) \otimes h) := \exp^{\diamond}\left(I(g)\right)\left(I(h) - \langle g,h\rangle\right) 
\]
and linearity and extend it by $L^p$-closedness to the domain $D(\delta) \subset L^p(\Omega\times [0,\infty))$.

By the $L^p$-closedness we obviously mean for a sequence of processes $(u^n)_{n \in \NN} \subset L^p(\Omega\times [0,\infty))$ such that $u^n \rightarrow u$ strongly in $L^p(\Omega\times [0,\infty))$ and the existing Hitsuda-Skorokhod integrals $\delta(u^n)$ converge strongly towards $X$ in $L^p(\Omega,\mathcal{F},P)$, we define 
\[
\delta (u) := X. 
\]
\end{definition}

\begin{remark}\label{rem:NewSkorohod}
$\left.\right.$

(i) \ The existence and uniqueness of the Hitsuda-Skorokhod integral in Definition \ref{def:NewSkorohod} on the domain $D(\delta)$ is a simple consequence of the construction via the set $\mathcal{E}^{Exp}$ and the ($P$-almost sure)-uniqueness of the $L^p$-limits $\delta (u) := X$.

(ii) \ Suppose $g,h,v \in \mathcal{E}$. Via $\exp^{\diamond}(I(g)) \exp^{\diamond}(I(v)) = e^{\langle g,v\rangle}\exp^{\diamond}(I(g+v))$ and the Wiener chaos expansion in \eqref{eq:WickExWickPowerSeries}, it is
\begin{align*}
\ex[e^{\diamond I(g)} I(h) e^{\diamond I(v)}] = e^{\langle g,v\rangle} \ex[e^{\diamond I(g+v)} I(h)] = e^{\langle g,v\rangle} \langle g+v,h\rangle
\end{align*}
and therefore
\begin{align}\label{eqExpAsWick}
\left(S \, e^{\diamond I(g)} \left(I(h) - \langle g,h\rangle\right) \right) (v) &= e^{\langle g,v\rangle} \left(\langle g+v,h\rangle - \langle g,h\rangle\right) = \int_{0}^{\infty} (S e^{\diamond I(g)})(v) h(s) v(s) ds.
\end{align}
Thus, Definition \ref{def:NewSkorohod} equals Definition \ref{def:SkorokhodLp} on $\mathcal{E}^{Exp}$. Due to $L^p$-continuity of the S-transform \eqref{eq:SIsLpCont} and Proposition \ref{prop:Ldense}, the equality follows on the domain $D(\delta)$ for $p>1$. Moreover, the product type formula \eqref{eqExpAsWick} yields the representation
\begin{equation*}
\delta(e^{\diamond I(g)} \otimes h) = e^{\diamond I(g)} \diamond I(h).
\end{equation*}

(iii) \ Obviously, the S-transform characterization in Definition \ref{def:SkorokhodLp} cannot be applied on random elements in $L^p$ for $p<1$, where we are beyond Banach spaces. 

Let us consider a simple example of a Hitsuda-Skorokhod integral in Definition \ref{def:NewSkorohod} for $p=3/2$. Let the nonadapted constant process
\[
u = \exp(B_{1/3}^2) \eins_{[0,1/3]}(t). 
\]
An approximation via elements elements in $\mathcal{E}^{Exp}$ can be obtained via the expansion $\exp(B_t^2) = \sum_{k=0}^{\infty} B_t^{2k}/k!$ and the approximation of monomials. This follows by the linear expansion of ordinary monomials $B_t^{2k}$ in terms of Hermite polynomials $h^l_t(B_t)$, $l \in \{0,\ldots, 2k\}$ and 
\[
\dfrac{\partial^l}{\partial w^l} \left. e^{\diamond w B_t} \right|_{w=0} = h^l_t(B_t),
\]
$P$-almost surely and in $L^p(\Omega)$, $p\geq 1$ (see e.g. \cite{BP1}). We omit these technical reformulations and notice that it suffices to consider also the processes $v^n = \sum_{k=0}^{n} \frac{1}{k!} B_{1/3}^{2k}\eins_{[0,1/3]}(t)$ instead of the approximating sequence in $\mathcal{E}^{Exp}$. These finite chaos elements exist in $L^1(\Omega \times [0,\infty))$. Thanks to $\delta(B_{1/3}^{2k} \eins_{[0,1/3]}(t)) = B_{1/3}^{2k+1} - \frac{2k}{3} B_{1/3}^{2k-1}$ via S-transform (see e.g. the Skorokhod integration by parts formula in \cite[Theorem 6.15]{DiNunno}), we obtain the short representation
\begin{align*}
\delta\left(v^n\right) =  \sum_{k=0}^{n} \frac{1}{k!} B_{1/3}^{2k+1} -  \frac{2}{3}\sum_{k=1}^{n} \frac{k}{k!}B_{1/3}^{2k-1} = \frac{1}{3}\sum_{k=0}^{n} \frac{1}{k!} B_{1/3}^{2k+1}.
\end{align*}
A simple calculution gives for all $t\geq 0$, $k \in \NN$,
\begin{equation*}
\ex[|B_t^{2k+1}|] = \sqrt{2/\pi} (2k)!! t^{k+1/2}.  
\end{equation*}
Hence, by the triangle inequality
\[
\ex[|\delta\left(v^n\right)|]  \leq \frac{\sqrt{2/\pi}}{3} \sum_{k=0}^{n} \frac{(2k)!!}{k!}  \frac{1}{3^{k+1/2}} = \frac{\sqrt{2/\pi}}{3^{3/2}} \sum_{k=0}^{n}  (2/3)^{k}.
\]
Thanks to dominated convergence we conclude $u \in D(\delta)$ with $p=1$ (according to Definition \ref{def:NewSkorohod}) and
\[
\delta(u) =  \frac{1}{3}\sum_{k=0}^{\infty} \frac{1}{k!} B_{1/3}^{2k+1} = \frac{1}{3} B_{1/3}\exp(B_{1/3}^2).
\]
As observed in Remark \ref{rem:ElemNewIntNotSkor}, $u \notin L^p(\Omega\times [0,\infty))$ for all $p>3/2$ and therefore $u \notin D(\delta)$ for $p>3/2$.


\end{remark}

The dense set in Proposition \ref{prop:Ldense} and the S-transform can be used to define the Malliavin derivative, see e.g. \cite[Proposition 49]{BP3}. Similarly to Definition \ref{def:NewSkorohod}, the following gives the Malliavin derivative on the Sobolev spaces $\mathbb{D}^{1,p}$, $p \geq 1$, in \cite[1.2]{Nualart}:
\begin{definition}\label{def:NewMalliavin}
We define the Malliavin derivative on the (dense) set $\{e^{\diamond I(g)}, g \in \mathcal{E}\}$
via 
\[
D_t (e^{\diamond I(g)}) := e^{\diamond I(g)} g(t) 
\]
and extend it to the closable operator on the domain $\mathbb{D}^{1,p}$.
\end{definition}

Following the extension in Definition \ref{def:NewSkorohod}, we introduce for every $p\geq 1$:

\begin{definition}\label{def:VeryNewIntegral}
We define the Ayew-Kuo integral on the set $\mathcal{E}^{Exp}$
via 
\[
\bar{I}(\exp^{\diamond}(I(g)) \otimes h) := \exp^{\diamond}\left(I(g)\right)\left(I(h) - \langle g,h\rangle\right) 
\]
and linearity and extend it by the following closedness to the domain $D(\bar{I}) \subset L^p(\Omega\times [0,\infty))$:

Suppose a sequence of processes $(u^n)_{n \in \NN} \subset \mathcal{E}^{Exp}$ such that 
\[
\int_{0}^{\infty}|u^n_s - u_s|^p ds \rightarrow 0 \quad a.s. , 
\]
and the existing Ayew-Kuo integrals $\bar{I}(u^n)$ converge in probability, then we define
\[
\bar{I}(u) = \lim_{n \rightarrow \infty} \bar{I}(u^n). 
\]
\end{definition}

\begin{remark}
(i) \ The existence and uniqueness of the stochastic integral in Definition \ref{def:VeryNewIntegral} on the domain $D(\bar{I})$ follows again by the construction on the dense set $\mathcal{E}^{Exp}$ and the $P$-a.s.-uniqueness of the limits.

(ii) \ In contrast to Definitions \ref{def:ElemNewIntegral} and \ref{def:NewIntegral},  the stochastic integral in Definition \ref{def:VeryNewIntegral} is defined in one step on the time horizon $[0,\infty)$.
\end{remark}

Our main result is the following:

\begin{theorem}
$\left.\right.$
\begin{enumerate}
 \item Definition \ref{def:VeryNewIntegral} extends Definition \ref{def:NewSkorohod}.
\item Definition \ref{def:VeryNewIntegral} extends Definition \ref{def:NewIntegral}.  
\end{enumerate}
\end{theorem}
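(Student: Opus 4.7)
The plan is to prove both parts by producing, for each integrand $u$ on the source side, an approximating sequence in $\mathcal{E}^{Exp}$ that verifies the closedness condition of Definition \ref{def:VeryNewIntegral}. For (1) the approximants are already at hand and only the modes of convergence must be upgraded; for (2) they must be constructed in two layers---Riemann discretization followed by Wick-exponential approximation of the random coefficients---and then stitched together by a diagonal argument.

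For (1), fix $u \in D(\delta)$ in the sense of Definition \ref{def:NewSkorohod}. By construction there is $(u^n) \subset \mathcal{E}^{Exp}$ with $u^n \to u$ strongly in $L^p(\Omega\times[0,\infty))$ and $\delta(u^n) \to \delta(u)$ strongly in $L^p(\Omega)$. Fubini yields $\ex\bigl[\int|u^n_s - u_s|^p\,ds\bigr] \to 0$, so along some subsequence $(u^{n_k})$ we have $\int|u^{n_k}_s - u_s|^p\,ds \to 0$ almost surely, while $L^p(\Omega)$-convergence gives $\delta(u^{n_k}) \to \delta(u)$ in probability. Since $\bar{I}$ and $\delta$ coincide on $\mathcal{E}^{Exp}$ by the identical defining formulae, the sequence $(u^{n_k})$ meets the hypotheses of Definition \ref{def:VeryNewIntegral}, whence $u \in D(\bar{I})$ and $\bar{I}(u) = \delta(u)$.

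For (2), we work with $p=2$ to match Definition \ref{def:NewIntegral}. The central step is to show that each elementary Ayew-Kuo integrand $u = f\varphi\,\eins_{[a,b]}$ lies in $D(\bar{I})$ in the new sense with the integral equal to $\bar{I}(f\varphi)$. For partitions $\Delta_m$ with $\|\Delta_m\|\to 0$, set $u^m(t) := \sum_i f(t^m_{i-1})\varphi(t^m_i)\eins_{(t^m_{i-1},t^m_i]}(t)$; continuity of the paths of $f\varphi$ on $[a,b]$ makes this approximation uniform in $t$ almost surely, so $\int|u^m_s - u_s|^2\,ds \to 0$ a.s. Next, since $\mathcal{F}$ is Brownian-generated, $f(t^m_{i-1}) \in L^2(\Omega,\mathcal{F}_{t^m_{i-1}},P)$ and, by chaos decomposition applied to the independence relation, $\varphi(t^m_i) \in L^2(\Omega,\sigma(B_s - B_{t^m_i}:s\geq t^m_i),P)$; each is approximable by linear combinations of Wick exponentials $\exp^{\diamond}(I(g_\alpha))$, $\exp^{\diamond}(I(g_\beta))$ whose generators are supported in $[0,t^m_{i-1}]$ and $[t^m_i,\infty)$ respectively. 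Applying the disjoint-support identity $\exp^{\diamond}(I(g_\alpha))\exp^{\diamond}(I(g_\beta))=\exp^{\diamond}(I(g_\alpha+g_\beta))$ produces elements $u^{m,k} \in \mathcal{E}^{Exp}$ approximating $u^m$. The crucial computation, directly from Definition \ref{def:VeryNewIntegral}, is
\[
\bar{I}\bigl(\exp^{\diamond}(I(g_\alpha+g_\beta))\otimes \eins_{(t^m_{i-1},t^m_i]}\bigr) = \exp^{\diamond}(I(g_\alpha))\exp^{\diamond}(I(g_\beta))\,\Delta B_i,
\]
because $\langle g_\alpha+g_\beta,\eins_{(t^m_{i-1},t^m_i]}\rangle=0$ by support disjointness. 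Linearity then gives $\bar{I}(u^{m,k}) = \sum_i Z^{m,k}_i\Delta B_i$, approximating the Riemann sum $\sum_i f(t^m_{i-1})\varphi(t^m_i)\Delta B_i$, which in turn converges in probability to $\bar{I}(f\varphi)$ by Definition \ref{def:ElemNewIntegral}.

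A nested Borel--Cantelli diagonal extraction (first in $k$ for each $m$, then in $m$) provides $u^{m,k(m)} \in \mathcal{E}^{Exp}$ with $\int|u^{m,k(m)}_s - u_s|^2\,ds \to 0$ almost surely and $\bar{I}(u^{m,k(m)}) \to \bar{I}(f\varphi)$ in probability, placing $f\varphi\,\eins_{[a,b]} \in D(\bar{I})$ with matching integral. Linearity transfers this to every finite combination $\Phi_n$ of elementary integrands; a further Borel--Cantelli diagonalization combining these $\mathcal{E}^{Exp}$-approximants with the given convergences $\int|\Phi_n - \Phi|^2\,dt \to 0$ a.s.\ and $\bar{I}(\Phi_n) \to \bar{I}(\Phi)$ in probability yields the desired approximating sequence in $\mathcal{E}^{Exp}$ for $\Phi$, so that $\Phi \in D(\bar{I})$ in the new sense with the two integrals agreeing. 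The main obstacle is the orchestrated management of three genuinely different modes of convergence---pathwise-uniform in $t$ from Riemann discretization, $L^2(\Omega)$-approximation of the random coefficients via Wick-exponential density in \emph{support-restricted} subspaces, and convergence in probability of the stochastic integrals---together with the careful bookkeeping that keeps the Wick generators $g_\alpha,g_\beta$ in disjoint half-line supports so that the vanishing pairing $\langle g_\alpha+g_\beta,\eins_{(t^m_{i-1},t^m_i]}\rangle=0$ persists through every approximation step.
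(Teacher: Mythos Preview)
Your argument for (i) is essentially the paper's: pick a subsequence along which the $L^p$-convergence of the integrands upgrades to the almost-sure $L^p([0,\infty))$-convergence required in Definition~\ref{def:VeryNewIntegral}, and note that $L^p(\Omega)$-convergence of the integrals implies convergence in probability. Your Fubini step is in fact more precisely aligned with Definition~\ref{def:VeryNewIntegral} than the paper's passage through $P\otimes\lambda$-a.e.\ convergence.

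For (ii) you and the paper argue in \emph{opposite directions}. The paper decomposes each generator $\exp^{\diamond}(I(g))\otimes h\in\mathcal{E}^{Exp}$ as a product of an adapted factor $\exp^{\diamond}(I(g\eins_{[0,t)}))h(t)$ and an instantly independent factor $\exp^{\diamond}(I(g\eins_{(t,\infty)}))$, computes the elementary Ayew--Kuo integral of this product directly from \eqref{eq:NewIntegral1}, and checks it equals $\exp^{\diamond}(I(g))(I(h)-\langle g,h\rangle)$. This places $\mathcal{E}^{Exp}$ inside the domain of Definition~\ref{def:NewIntegral}, and the paper then invokes the common (a.s.)--in-probability closedness. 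You instead start from an elementary integrand $f\varphi$ and build an $\mathcal{E}^{Exp}$-approximation via Riemann discretization in time and Wick-exponential approximation of the random coefficients on the restricted supports $[0,t_{i-1}^m]$ and $[t_i^m,\infty)$. Your route is the one that directly yields $D(\text{Def.~\ref{def:NewIntegral}})\subset D(\text{Def.~\ref{def:VeryNewIntegral}})$; the paper's route, read literally, gives the reverse inclusion. What your approach buys is the stated direction of extension without ambiguity; what the paper's approach buys is a one-line computation on $\mathcal{E}^{Exp}$ with no diagonal extraction.

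There is, however, a genuine gap in your part~(ii). Definition~\ref{def:ElemNewIntegral} assumes only that $f$ and $\varphi$ have continuous paths and the limit~\eqref{eq:NewIntegral1} exists in probability; it does \emph{not} assume $f(t),\varphi(t)\in L^2(\Omega)$ (Remark~\ref{rem:ElemNewIntNotSkor} is precisely about such an example). Your assertion ``$f(t^m_{i-1}) \in L^2(\Omega,\mathcal{F}_{t^m_{i-1}},P)$'' and the parallel claim for $\varphi$ are therefore unjustified, and the Wick-exponential approximation you invoke via $L^2$-density fails without it. The argument can be repaired by using that $\{\exp^{\diamond}(I(g)):g\in\mathcal{E}\}$ is total in $L^0(\Omega)$ as well (Proposition~\ref{prop:Ldense} covers $p\ge 0$), approximating $f(t^m_{i-1})$ and $\varphi(t^m_i)$ in probability rather than in $L^2$, and then extracting a.s.-convergent subsequences so that both $\int|u^{m,k}_s-u^m_s|^2\,ds\to 0$ a.s.\ and $\sum_i Z^{m,k}_i\Delta B_i\to\sum_i f(t^m_{i-1})\varphi(t^m_i)\Delta B_i$ in probability hold. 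You should also make explicit why instant independence of $\varphi(t)$ forces $\sigma(B_s-B_t:s\ge t)$-measurability (this uses the product structure of the Brownian $\sigma$-field, as in the chaos argument of Theorem~\ref{thm:ElementaryNewIntIsSkorohod}); without it your support-restricted Wick-exponential approximation of $\varphi(t^m_i)$ is not available.
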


\begin{proof}
(i) \ Suppose $u$ in the domain of the Hitsuda-Skorokhod integral in Definition \ref{def:NewSkorohod}, i.e. there exists a $p \geq 1$ and a sequence $(u^n)_{n \in \NN} \subset \mathcal{E}^{Exp}$ with $u^n \rightarrow u$ in $L^p(\Omega\times [0,\infty))$ and $\delta(u^n) \rightarrow X$ in $L^p(\Omega)$. This implies the convergence $u^n \rightarrow u$ in measure $P \otimes \lambda$ and therefore a subsequence $(n_k)_{k \in \NN}$ such that $u^{n_k} \rightarrow u$ $P \otimes \lambda$-almost everywhere as $k$ tends to infinity. Due to $\delta(u^n) = \bar{I}(u^n)$ we conclude $\delta(u^{n_k}) \rightarrow \delta(u)$ in probability. Since the limit in probability is unique, this yields that $u$ is contained in the domain of the Ayew-Kuo integral in Definition \ref{def:VeryNewIntegral} and $\bar{I}(u) = \delta(u)$.

(ii) \ Firstly, for every element $g(x)= \sum\limits_{j=1}^{m} a_j \eins_{(b_j,c_j]}(x)$ in $\mathcal{E}$, we observe the reformulation 
\begin{align*}
\exp^{\diamond}(I(g)) &= \exp^{\diamond}(I(g\eins_{[0,t)})) \diamond \exp^{\diamond}(I(g\eins_{(t,\infty)}))= \exp^{\diamond}(I(g\eins_{[0,t)})) \exp^{\diamond}(I(g\eins_{(t,\infty)})).
\end{align*}
Here the Wick product equals the ordinary product since the Wick exponentials are based on disjoint increments of the underlying Brownian motion (cf. \cite[Lemma 2.5.2]{Holden_Buch}). Hence, for the integrand $e^{\diamond I(g)} h$ we conclude an adapted part 
$$
\exp^{\diamond}(I(g\eins_{[0,t)}) h(t) = \exp\left(\sum_{j=1}^{m} a_j (B_{c_j \wedge t} - B_{b_j \wedge t}) - \frac{1}{2}\sum_{j=1}^{m} a_j^2 ((c_j \wedge t) - (b_j \wedge t)\right) h(t) 
$$
and an instantly independent part 
$$
\exp^{\diamond}(I(g\eins_{(t,\infty)})) = \exp\left(\sum_{j=1}^{m} a_j (B_{c_j \vee t} - B_{b_j \vee t}) - \frac{1}{2}\sum_{j=1}^{m} a_j^2 ((c_j \vee t) - (b_j \vee t)\right).
$$
Due to a standard density argument in $L^2([0,\infty))$, we obtain that for all $g,h \in \mathcal{E}$ the process $\exp^{\diamond}(I(g)) h(\cdot)$, is contained in the domain of the Ayew-Kuo integral in Definition \ref{def:NewIntegral}. In particular, thanks to linearity of the integral on disjoint time intervals, we conclude by a simple computation on finite sums in \eqref{eq:NewIntegral1},
$$
\bar{I}(\exp^{\diamond}(I(g)) \otimes h) = \exp\left(I(g)-\frac{1}{2}\int_{0}^{1} g^2(s) ds\right)\left(I(h) - \int_{0}^{1} g(s) h(s) ds\right)
.
$$
Hence, $\mathcal{E}^{Exp}$ is contained in the domain of the stochastic integral in Definition \ref{def:NewIntegral}. Then, via the (a.s)-$P$-closedness we conclude that Definition \ref{def:VeryNewIntegral} extends Definition \ref{def:NewIntegral}.
\end{proof}

\begin{remark}
Comparing Definitions \ref{def:NewSkorohod} and \ref{def:VeryNewIntegral}, we observe the difference of the closedness: In the Hitsuda-Skorokhod the first assumed convergence of the integrands in $L^p$-sense can be stronger than the almost sure convergence in Definition \ref{def:VeryNewIntegral}, whereas the postulated convergence  in probability of the integrals in Definition \ref{def:VeryNewIntegral} is weaker than for the Hitsuda-Skorokhod integral. However, we are unable to construct an example of an integrand in $D(\bar{I}) \setminus D(\delta)$ for the same $p \geq 1$. Therefore, we assume that these stochastic integrals coincide in the usual world of stochastic processes.
\end{remark}

\begin{remark}
The extension of the stochastic integrals in Definition \ref{def:VeryNewIntegral} or \ref{def:NewSkorohod} to $0<p<1$ seems difficult for the following reasons. Firstly, the spaces $L^p(\Omega\times [0,\infty))$ are indeed topological vector spaces but not Banach spaces anymore. In particular their dual is the trivial space $\{0\}$ (cf. \cite{Day}) and therefore a stochstic integral constructed by the set $\mathcal{E}^{Exp}$ and linearity as in Definition \ref{def:VeryNewIntegral} must be an unbounded operator. This would be extremely impractical. 
\end{remark}

\end{document}